\newtheorem{theo}{Theorem}
\newtheorem{cor}[theo]{Corollary}
\newtheorem{rem}[theo]{Remark}
\newcommand{\zero}{\mathbf{0}}
\newcommand{\R}{\ensuremath{\mathbb{R}}} 
\newcommand{\upchi}{\raise1pt\hbox{$\chi$}}
\def\dom{\textrm{dom}}
\newcommand{\be}{\begin{equation}}
\newcommand{\ee}{\end{equation}}
\def\benu{\begin{enumerate}}
\def\eenu{\end{enumerate}}
\begin{document}

\title[Regularity of monotone transport maps between unbounded domains]{Regularity of monotone transport maps\\ between unbounded domains}

\author{Dario Cordero-Erausquin and Alessio Figalli}

\thanks{
Institut de Mathematiques de Jussieu, Sorbonne Universit\'e - UPMC (Paris 6), France\\  ({\tt
      dario.cordero@imj-prg.fr})}
\thanks{ETH Z\"urich, Mathematics Department, R\"amistrasse 101, 8092 Z\"urich, Switzerland\\
({\tt
      alessio.figalli@math.ethz.ch})}

\begin{abstract}
The regularity of monotone transport maps plays an important role in several applications to PDE and geometry.
Unfortunately, the classical statements on this subject are restricted to the case when the measures are compactly supported. In this note we show that, in several situations of interest, one can to ensure the regularity of monotone maps even if the measures may have unbounded supports.
\end{abstract}

 \dedicatory{A Luis A. Caffarelli en su 70 a\~nos, con  amistad y admiraci\'on}

\maketitle

Given two Borel probability measures $\mu$ and $\nu$ on $\R^n$, with $\mu$ absolutely continuous with respect to the Lebesgue measures, it is known~\cite{mccann95} that there exists a lower semicontinuous convex function $u:\R^n \to \R\cup\{+\infty\}$ such that $T:=\nabla u$ pushes-forward $\mu$ onto $\nu$. We call this monotone map $T=\nabla u$ the \emph{Brenier-McCann} map. 
Although not needed, we recall that this map corresponds to the (unique {a.e.}) optimal transport map for the quadratic cost pushing  forward $\mu$ onto $\nu$ (see for instance \cite[Section 3]{dPF2}).

When $\mu$ and $\nu$ have densities $F$ and $G$ respectively (namely, $d\mu(x)=F(x)dx$ and $d\nu(y)=G(y)dy$), it follows from the work of McCann~\cite{mccann} that $u$ is finite inside the interior of the support of $F$ and the Monge-Amp\`ere equation 
\begin{equation}\label{mamc}
F(x) = G(\nabla u(x) )\det D^2 u(x)
\end{equation}
is satisfied in the following weak sense: the convex function $u$ admits at a.e. point a second derivative $D^2 u$ (that coincides also with the absolutely continuous 
part of the distributional second derivative), and with this second derivative the equation~\eqref{mamc} is satisfied $\mu$-a.e.

Alternatively, there exists a stronger regularity theory, first established by Caffarelli,  ensuring that~\eqref{mamc} holds in a classical sense (in particular the second distributional derivatives have no singular part, and $D^2u$ is defined with no ambiguity).  More precisely, 
most of the classical regularity of solutions of the Monge-Amp\`ere equation  rely on the study of Alexandrov solutions (see \eqref{eq:Alex} below for a definition), as in the seminal papers~\cite{caf1,cafC1a}. Hence, to apply this theory to monotone maps, the main difficulty is to verify that the Brenier-McCann map provides an Alexandrov solution to~\eqref{mamc},
 at least under some suitable assumptions on the measures (essentially, one has to assume that the target measure has a convex support \cite{caf2}).

We recall that the Brenier-McCann map is a handy tool to prove geometric and functional inequalities.  However, a technical difficulty arises when one would like to ensure that this map is sufficiently smooth, since that requires several restrictive assumptions on the densities and the supports.
While some are necessary (namely, the convexity of the support of the target measure, see for instance \cite{FRV-necsuff}), several other assumptions are of purely technical nature. Indeed, one typically assumes that the supports are \emph{bounded}, and that the densities are (globally) bounded away from zero and infinity on their respective support. These restrictions are due to the way the theory was built, as a part of the analysis of nonlinear PDE, rather than from the point of view of monotone transportation. Since in applications we often want to transport densities that are defined on the whole $\R^n$, this is a serious restriction.

The goal of this note is to:
\begin{enumerate}
\item allow for unbounded domains;
\item allow for local (i.e., on every compact set) boundedness of the densities, away from zero and infinity. 
\end{enumerate}
Here and in the sequel, given a Borel set $E$, $|E|$ stands for its Lebesgue measure and $1_E$ denotes its characteristic function (namely, $1_E(x)=1$ if $x\in E$ and $1_E(x)=0$ if $x \not\in E$).
This is our main result:

\begin{theo}\label{maintheo}
Let $X$ and $Y$ be two open sets in $\R^n$,  and assume that $|\partial X|=0$
and that $Y$ is convex. Let $F,G:\R^n\to \R$ be two nonnegative Borel functions with $\int_X F = \int_Y G= 1$ such that $F,1/F\in L^\infty_{\rm loc}(X)$ and $G, 1/G\in L^\infty(Y\cap B_R)$ for any $R>0$. Denote by $T=\nabla u$ the Brenier-McCann map  pushing $1_X(x) F(x)dx$ forward to $1_Y(y)G(y)dy$. 
Assume in addition that at least one of these assumptions hold:
\begin{enumerate}
\item[(1)] either $n=2$;
\item[(2)] or $n\geq 3$, $F,1/F\in L^\infty(X\cap B_R)$ for any $R>0$,
and one of the following holds:
\begin{enumerate}
\item[(2-a)]
$X=\R^n$;
\item[(2-b)] $Y$ is bounded;
\item[(2-c)] $X$ is locally uniformly convex.
\end{enumerate}
\end{enumerate}
 Then:
\begin{enumerate}
\item[(i)] For any $X'\subset\subset X$ there exist $\alpha,\epsilon>0$ such that $u \in C^{1,\alpha}(X')\cap W^{2,1+\epsilon}(X')$. In addition
$T=\nabla u:X\to Y$ is continuous,
$T(X)$ is an open subset of $Y$ of full measure (inside $Y$), and
$T:X\to T(X)$ is a homeomorphism.
Finally, if $X=\R^n$ then $T(X)=Y$.
\item[(ii)] If in addition there exist $k\geq0$ and $\beta \in (0,1)$ such that $F$ and $G$ are of class $C^{k,\beta}_{\rm loc}$ on $X$ and $Y$ respectively, then
$T=\nabla u:X\to T(X)$ is a diffeomorphism of class $C^{k+1,\beta}_{\rm loc}$.
\end{enumerate}
\end{theo}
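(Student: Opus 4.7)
The strategy is to reduce the statement to the classical Caffarelli regularity theorems for Monge-Amp\`ere via the theory of sections. For $x_0$ in the interior of $X$ and $p \in \partial u(x_0)$, consider the section
$$
S_h(x_0,p) := \{x \in \R^n : u(x) \leq u(x_0) + p\cdot(x-x_0) + h\},
$$
which is convex. The key reduction step is this: if one can find $h>0$ such that $S_h(x_0,p) \subset\subset X$ and $\partial u(S_h(x_0,p)) \subset V$ for some bounded convex $V \subset\subset Y$, then $F$ and $G$ are bounded above and below by positive constants on these bounded sets, and Caffarelli's strict convexity theorem followed by his $C^{1,\alpha}$ theorem yield the desired local regularity of $u$ near $x_0$. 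The $W^{2,1+\epsilon}$ estimate then follows from the De Philippis--Figalli theorem.

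The main technical work is to achieve such a localization under each of the hypothesis sets. A useful preliminary observation is that since $\nu = 1_Y G\,dy$ is supported in the open set $Y$, one has $\nabla u(x_0) \in Y$ for $\mu$-a.e.\ $x_0$, so for such $x_0$ we can set $p := \nabla u(x_0)$ and target $\partial u(S_h) \subset B_\eta(p)$ for some small $\eta>0$ with $B_\eta(p)\subset Y$. The four cases are handled as follows. In case (2-b) with $Y$ bounded, $\nabla u$ takes values in a bounded set so $u$ has linear growth at infinity; combining this with the mass balance $\mu(S_h) = \nu(\partial u(S_h))$ and the local boundedness of $F$ allows one to control both the diameter of $S_h$ and its image. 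In case (2-a) with $X=\R^n$, $u$ is finite on all of $\R^n$ hence locally Lipschitz, so sections are a priori bounded and automatically contained in $X$; the image side is controlled by an analogous mass-balance argument forcing $\partial u(S_h) \to \{p\}$ as $h \to 0$. Case (2-c) uses the local uniform convexity of $X$ to build a quantitative barrier near $\partial X$. Case (1), $n=2$, leverages planar-specific strict-convexity arguments (in the spirit of Alexandrov and Heinz) that permit the weaker assumption on $1/F$.

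Once local $C^{1,\alpha}$ and strict convexity are established on every compact $X' \subset\subset X$, continuity of $T=\nabla u$ follows from standard convex analysis; injectivity of $T$ from strict convexity of $u$; and openness of $T(X)$ together with the homeomorphism property by applying the analogous section analysis to the dual potential $u^*$ (the Brenier--McCann potential from $\nu$ to $\mu$). Surjectivity $T(X) = Y$ when $X=\R^n$ then follows from noting that $\nabla u^*$ is well-defined on $Y$ and takes values in $\R^n$, providing a right inverse to $T$. Part (ii) is a bootstrap argument: once $u \in C^{1,\alpha}$ and $T$ is a homeomorphism, the classical equation $\det D^2 u = F/(G \circ \nabla u)$ has a $C^{k,\beta}_{\rm loc}$ right-hand side bounded above and away from zero on compacts, and Caffarelli's $C^{2,\alpha}$ interior estimate together with a standard Schauder iteration yield $u \in C^{k+2,\beta}_{\rm loc}$, hence $T\in C^{k+1,\beta}_{\rm loc}$.

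The main obstacle I expect is the construction of good sections across all four cases in a unified way, since each requires a tailored argument; in particular, case (2-b), where $Y$ is bounded but $X$ may be unbounded, and case (2-c), where local uniform convexity of $X$ must be quantitatively exploited to ensure interior containment, appear most delicate. A secondary difficulty is ensuring that the weaker hypothesis on $1/F$ in the planar case (1) really suffices, which requires revisiting the proof of Caffarelli's strict convexity result in dimension two.
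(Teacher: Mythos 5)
Your overall plan shares the paper's endpoint (reduce to Caffarelli's local theory for Alexandrov solutions), but the route you describe has two genuine gaps, and the second is structural.

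\textbf{First gap: the Alexandrov solution property is assumed, not proved.} Throughout, you invoke a ``mass balance $\mu(S_h)=\nu(\partial u(S_h))$.'' This identity is precisely the statement that $\mathscr{M}_u = \frac{F}{G(\nabla u)}\,dx$ on $X$, i.e.\ that the Brenier--McCann potential is an \emph{Alexandrov} solution. This is not automatic: it is Caffarelli's Lemma~1 argument (here one must also show $\mathscr{M}_u(\R^n\setminus X)=0$), it uses the convexity of $Y$ in an essential way, and in the unbounded setting it requires the careful extension of $u$ outside $X$ and the relation $\partial u(\R^n)\subset\overline{Y}$, $|Y\setminus\partial u(X)|=0$. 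Without this step, the sections in your argument carry no quantitative Monge--Amp\`ere information.

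\textbf{Second gap: the localization is circular.} Your reduction hinges on finding $h>0$ with $S_h(x_0,p)\subset\subset X$. But such an $h$ exists at a given $x_0$ \emph{if and only if} $u$ is strictly convex at $x_0$, which is exactly what needs to be proved. The paper's Step 4 is entirely devoted to proving strict convexity, by a case analysis on where the exposed points of the putative flat piece $\Sigma=\{u=\ell\}$ can lie (in $X$, in $\R^n\setminus\overline X$, on $\partial X$, or nowhere). Your justifications for why $S_h$ is compactly contained do not hold: in case (2-a) you assert ``$u$ finite and locally Lipschitz $\Rightarrow$ sections a priori bounded,'' and in case (2-b) you rely on linear growth at infinity -- both are defeated by $u(x)=|x_1|$, which is finite, globally Lipschitz, and has linear growth, yet every section is an unbounded slab. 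What actually saves case (2-b) in the paper is that global Lipschitzness forces $\dom(u)=\R^n$, ruling out exposed points of $\Sigma$ on $\partial(\dom(u))$ (other cases are handled by the Alexandrov maximum principle, using $\mathscr M_u(\R^n\setminus X)=0$, and by the classical argument for exposed points inside $X$). Case (2-c) genuinely requires the quantitative comparison $|X_\theta|\lesssim\theta^{n+1}$ against $|Y_\theta\cap B_2|\gtrsim\theta^{n-1}$; ``build a quantitative barrier'' gestures at this but does not substitute for it. Finally, your restriction to $\mu$-a.e.\ $x_0$ with $\nabla u(x_0)\in Y$ cannot be upgraded to all $x_0\in X$ without strict convexity.

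In short, the proposal skips the two central steps of the paper -- proving the Alexandrov property and proving strict convexity -- and the section-based localization you outline presupposes the latter. The concluding bootstrap in your part (ii) is fine once those steps are in place, and your use of the dual potential $u^*$ for the homeomorphism claim is in the right spirit, though surjectivity for $X=\R^n$ in the paper comes from the convexity of $\nabla u(\R^n)$ (Gromov) rather than from $\nabla u^*$ alone.
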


\begin{rem}{\em 
The case $X=\R^n$ and $Y$ bounded was already considered in~\cite{ADM}.}
\end{rem}

\begin{rem}\label{rmk:unif convex}{\em 
In Case (2-c), the assumption $X$ being locally uniformly convex can be relaxed. More precisely, it suffices to ask that near each point on $\partial X$ there exists a system of coordinates such that $\partial X$ can be locally written as the graph of a function $f:\R^{n-1}\to \R$ with $f(0)=0$
and $f(x')\geq c|x'|^{\lambda}$ for some $c>0$ and $0<\lambda<\frac{2(n-1)}{n-2}$, see Remark \ref{rmk:bdry}.}
\end{rem}

\begin{rem}{\em 
\label{rmk:duality}
It is well known that if $S$ denotes the optimal transport map from $\nu$ to $\mu$, then $S=T^{-1}=\nabla u^\ast$ $\nu$-a.e., where $u^\ast$  is the Legendre transform of $u$ (see for instance \cite[Section 6.2.3]{AGS}).
Hence, if the assumptions on $X$ and $Y$ are reversed, we deduce that $S:Y\to S(Y)\subset X$ is a homeomorphism (resp. a diffeomorphism if the densities are smoother).
Hence, since $S=T^{-1}$, $T$ is a homeomorphism (resp. diffeomorphism)
from $S(Y)$ onto $Y.$
In particular, whenever one can apply Theorem \ref{maintheo} both from $X$ to $Y$ and from $Y$ to $X$, then one deduces that
$$
T:X\to T(X)\subset Y\quad \text{and}\quad
T^{-1}:Y\to T^{-1}(Y)\subset X \quad \text{are homeomorphisms}, 
$$
thus $T(X)=Y$.
}
\end{rem}

An immediate consequence of Remark \ref{rmk:duality} is the following corollary for the case $X$ convex and $Y=\R^n$:
\begin{cor}
\label{maincor}
Let $X$ be an open  convex set in $\R^n$ (in particular, one may take $X=\R^n$), let $F,G:\R^n\to \R$ be two nonnegative Borel functions with $\int_X F = \int_{\R^n} G= 1$ such that $F,1/F$ belong to $L^\infty(X\cap B_R)$ and $G, 1/G$ belong to $L^\infty(B_R)$ for any $R>0$. Denote by $T=\nabla u$ the Brenier-McCann map  pushing $1_X(x) F(x)dx$ forward to $G(y)dy$.
Then $T:X\to \R^n$ is a homeomorphism. Also, if there exist $k\geq 0$ and $\beta \in (0,1)$ such that $F$ and $G$ are of class $C^{k,\beta}_{\rm loc}$, then $T:X\to \R^n$ is a diffeomorphism of class $C^{k+1,\beta}_{\rm loc}$.
\end{cor}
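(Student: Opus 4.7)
The plan is to deduce Corollary~\ref{maincor} directly from Theorem~\ref{maintheo} combined with the Legendre duality recalled in Remark~\ref{rmk:duality}. The reason one cannot simply apply the theorem in the forward direction $X \to \R^n$ is that a general open convex $X$ need not fall under any of the sub-cases (2-a)--(2-c): it could, for instance, be a half-space, which is neither $\R^n$, nor bounded, nor locally uniformly convex. The trick is therefore to transport in the dual direction, for which sub-case~(2-a) always applies, and then invert.

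Concretely, let $S = \nabla u^*$ denote the Brenier-McCann map pushing $G(y)\,dy$ on $\R^n$ forward to $1_X(x) F(x)\,dx$ on $X$. I would apply Theorem~\ref{maintheo} to $S$, with $\R^n$ in the role of source domain and $X$ in the role of target domain. The source boundary is empty, so has Lebesgue measure zero; the target $X$ is convex by assumption. The source density $G$ satisfies $G, 1/G \in L^\infty(B_R)$ for every $R>0$, hence in particular $G, 1/G \in L^\infty_{\rm loc}(\R^n)$, which is the mass-integrability condition required on the source. The target density $F$ satisfies $F, 1/F \in L^\infty(X \cap B_R)$, which is the integrability condition required on the target. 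For $n=2$, Case~(1) of the theorem applies. For $n \geq 3$, the source-side $L^\infty$-on-balls condition demanded by Case~(2) is met (again because $G, 1/G \in L^\infty(B_R)$ for every $R$), and sub-case~(2-a) applies precisely because the source domain is $\R^n$.

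Part~(i) of Theorem~\ref{maintheo} then yields that $S$ is a homeomorphism from $\R^n$ onto the open full-measure subset $S(\R^n) \subset X$, and the concluding clause ``if $X = \R^n$ then $T(X)=Y$'' of (i) (read with the source equal to $\R^n$) gives $S(\R^n) = X$. Thus $S:\R^n \to X$ is a homeomorphism, and its inverse $S^{-1}:X \to \R^n$ is a homeomorphism coinciding a.e.\ with $T$ by Remark~\ref{rmk:duality}; taking $S^{-1}$ as the (everywhere defined) representative of $T$ produces the desired homeomorphism. For the regularity statement, part~(ii) of the theorem upgrades $S$ to a $C^{k+1,\beta}_{\rm loc}$ diffeomorphism of $\R^n$ onto $X$, and the inverse function theorem promotes $T = S^{-1}$ to a $C^{k+1,\beta}_{\rm loc}$ diffeomorphism from $X$ onto $\R^n$. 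There is no substantial obstacle here: all the heavy lifting sits inside Theorem~\ref{maintheo}, and the corollary's real content is the observation that the dual direction always falls under sub-case~(2-a).
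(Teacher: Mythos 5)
Your proposal is correct and is essentially the paper's intended argument: the corollary is presented as an immediate consequence of Remark~\ref{rmk:duality}, which amounts to applying Theorem~\ref{maintheo} in the dual direction $\R^n\to X$ (where sub-case~(2-a) always holds and the final clause of part~(i) forces $S(\R^n)=X$) and then inverting. You correctly identify the key point that the forward direction $X\to\R^n$ need not fall under any of cases~(2-a)--(2-c) for a general open convex $X$, which is precisely why the duality detour is necessary.
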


\begin{rem}{\em 
To illustrate some subtleties behind the proof of Theorem \ref{maintheo},
in the appendix we provide an example showing that
Corollary \ref{maincor} would be false for $n\geq 3$ if we replace the assumption $F, 1/F\in L^\infty(X\cap B_R)$ for any $R>0$ with the weaker hypothesis $F, 1/F\in L^\infty_{\rm loc}(X)$.}
\end{rem}

Before proving Theorem \ref{maintheo}, let us recall some background and fix some notation. \\

 Given a convex function $u:\R^n\to \R\cup\{+\infty\}$, we denote by $\dom(u)$ the convex set where $u$ is finite. Recall that $u$ is locally Lipschitz in the interior of $\dom(u)$, so in particular it is differentiable a.e. there (see for instance \cite[Appendix A.4]{figalliBook}). We denote by $\dom(\nabla u)\subset \dom(u)$ the set where $u$ is differentiable.  

 As we mentioned before, the regularity  theory builds upon what are called \emph{Alexandrov solutions}, that we now introduce. Given a convex function $u:\R^n \to \R\cup\{+\infty\}$,
 one can associate to it the so-called \emph{Monge-Amp\`ere measure} $\mathscr{M}_u$ defined by 
$$\mathscr{M}_u(A) := |\partial u (A)|\qquad \text{for every Borel set $A\subset \R^n$,}$$ 
where
$$
\partial u(A):=\bigcup_{x \in A}\partial u(x)
$$
and $\partial u(x)$ denotes the subdifferential of $u$ at $x$, that is
$$
\partial u(x):=\{p\in \R^n\,:\,u(z)\geq u(x)+p\cdot (z-x)\quad \forall\,z \in \R^n\}
$$
(see for instance \cite[Section 2.1]{figalliBook} for a proof of the fact that $\mathscr M_u$ is a Borel measure).
When $u$ is of class $C^2$, one can prove that the Monge-Amp\`ere measure of $u$ is simply given by $\det D^2 u\,dx$ (see \cite[Example 2.2]{figalliBook}).

Comparing this with \eqref{mamc}, one says that the Brenier-McCann map $T=\nabla u$ is an Alexandrov solution
of the Monge-Amp\`ere equation if  
\begin{equation}
\label{eq:Alex}
\mathscr{M}_u(A) = \int_{A} \frac{F(x)}{G(\nabla u(x))}\, dx\qquad \text{for all $A\subset X$ Borel.}
\end{equation}
We recall that, when $\nabla u$ is the Brenier-McCann map, then $u$ is finite inside $X$. In particular $u$ is differentiable a.e. in $X$ and the above definition makes sense.

\begin{proof}[Proof of Theorem~\ref{maintheo}]
Let $\mu$ and $\nu$ be Borel probability measures on $\R^n$ with densities $1_X(x) F(x)$ and $1_Y(y)G(y)$, respectively. 
Observe that the assumptions on the densities and on the domains ensure that  $\mu$ is equivalent to the Lebesgue measure on $\overline X$, in the sense that these measures share the same sets of zero measure, and that $\nu$ is equivalent to the Lebesgue measure on $\overline Y$.  
\\

\noindent
{\it - Step 1: extending $u$ outside $X$.}
Note that,
from the point of view of mass transport, one only cares about the functions $u$ and $T=\nabla u$ inside the set $X$, since there is no mass in $\R^n\setminus X$.
However, for technical purposes, it is convenience to fix a precise definition for $u$ also outside $X$.
Although there is no unique way of defining $u$ outside $X$, in our case it is convenient to use the ``minimal'' possible extension.
More precisely, since $u$ is convex and finite inside $X$, for any $z \in X$ we have
\begin{equation}\label{extension}
u(z) = \sup_{x\in X,\,  p \in \partial u (x)} \big\{ u(x) + p\cdot (z-x)\big\}.
\end{equation}
Then we take~\eqref{extension} as definition of $u(z)$ for $z\in \R^n \setminus X$.

It is worth mentioning that this does not change $\partial u$ (and $\nabla u$) inside $X$, since the subdifferential at points in the interior of $\dom (u)$ is determined only by the local behavior of $u$ (compare with \cite[Proof of Theorem 4.6.2]{figalliBook}).
\\

\noindent
{\it - Step 2: some useful observations.}
Since $u$ is finite inside $X$, it is locally Lipschitz there. Thus $$|X\setminus \dom (\nabla u)|= \mu(X\setminus \dom (\nabla u))=0.$$ Moreover, since $\mu((\nabla u)^{-1}(\R^n \setminus Y))=\nu(\R^n \setminus Y)=0$, there exists a Borel set $N$ satisfying 
\begin{equation}\label{gradimage}
N\subset X, \qquad \mu(N)=0, \qquad \textrm{and }\qquad  \nabla u(x)\in Y\quad
\forall \,x\in (X\setminus N) \cap \dom(\nabla u).
\end{equation}
Another useful observation is that  $Y$ is contained in the domain of the Legendre's transform $u^\ast:\R^n\to \R\cup\{+\infty\}$ of $u$, given by 
$$
u^\ast(y):= \sup_{x\in \R^n}\bigl\{ {x\cdot y} - u(x) \bigr\}.
$$ 
To see this, we can for instance invoke the fact that $\nabla u^\ast$ is the Brenier-McCann map between $\nu$ and $\mu$ (see for instance \cite[Section 6.2.3]{AGS}), hence $\nu(Y\setminus \dom (\nabla u^\ast))=0$.
\\

\noindent
{\it - Step 3: $u$ is an Alexandrov solution.}
We now want to prove that $u$ satisfies \eqref{eq:Alex}.
Actually, we shall prove more, namely that
\begin{equation}
\label{eq:Alex2}
\mathscr{M}_u(A) = \int_{A\cap X} \frac{F(x)}{G(\nabla u(x))}\, dx\qquad \text{for all $A\subset \R^n$ Borel.}
\end{equation}
This will be important in the next steps of the proof.

To prove \eqref{eq:Alex2}, we suitably modify the argument of Caffarelli~\cite{caf2} as presented in~\cite{dPF2}. We need the following two observations.

First, given a Borel set  $A\subset X$, there exists a Borel set $N_A\subset X$ with $\mu(N_A)=0$ such that 
\begin{equation}\label{aeinverse}
 (\nabla u)^{-1} (\partial u (A)\cap Y)\cap X = (A\setminus N) \cup N_A .
 \end{equation}
Indeed,  recalling~\eqref{gradimage},
we have the inclusion
$$A\setminus N \subset  (\nabla u)^{-1} (\partial u (A)\cap Y).$$
In addition, recalling that $u^\ast$ denotes the Legendre's transform of $u$, the set $\big[(\nabla u)^{-1} (\partial u (A)\cap Y)\cap X\big]\setminus (A\setminus N) $ is contained in 
\begin{multline*}
N\cup\{x\in X\cap \dom(\nabla u)\,:\,  \nabla u(x) \in \partial u(A)\cap Y \textrm{ and } x \notin A\ \} \\
 \subset N\cup  \{x\in X\cap \dom(\nabla u)\, :\,  u^\ast \textrm{ not differentiable at } \nabla u(x), \textrm{ and } \nabla u(x) \in  Y\}
\end{multline*}
(see \cite[Appendix A.4]{figalliBook} for more details).
Hence, since the set $$\{y\in  Y\, :\, u^\ast \textrm{ not differentiable at } y\}$$  has $\nu$-measure zero inside $Y$ (see Step 2), we conclude that its pre-image by $\nabla u$ is of $\mu$-measure zero, as desired.

Second, we need the following crucial property, first obtained by  Caffarelli in~\cite[Lemma~1]{caf2} for bounded domains (this is where the convexity of $Y$ comes into play):
\begin{equation}\label{subgradimage}
\partial u (X) \subset \overline Y\qquad \text{and} \qquad |Y\setminus \partial u (X)|=0.
\end{equation}
To prove this fact it is enough to observe that, thanks to \eqref{gradimage}, $\nabla u(x)\in Y$ for a.e. $x \in X$. By continuity of the subdifferential of convex functions (see \cite[Lemma A.4.8]{figalliBook}) we deduce that
$$\nabla u(X\cap {\rm dom}(\nabla u))\subset \overline Y,$$ hence the inclusion $\partial u (X) \subset \overline Y$ follows by \cite[Corollary A.4.11]{figalliBook}. To prove the second part of \eqref{subgradimage}, we note that $(\nabla u)_\#\mu=\nu$, and $Y$ coincides with the support of $d\nu(y)=1_Y(y)G(y)\,dy$. Thus, for a.e. $y \in Y$
there exists $x \in X\cap {\rm dom}(\nabla u)$ such that $y=\nabla u(x)$. This proves that $\nabla u(X\cap {\rm dom}(\nabla u))$ has full measure inside $Y$, which concludes the proof of \eqref{subgradimage}.

Actually, we note that by the way $u$ is defined in the whole $\R^n$ (see \eqref{extension})
it follows that 
$$
\partial u(\R^n)\subset \overline{\partial u(X)}\subset \overline Y,
$$
that combined with \eqref{subgradimage} implies that 
$$
\partial u(\R^n\setminus X)\subset \bigl(\partial u(\R^n\setminus X)\cap \partial u(X)\bigr)\cup \hat N,
$$
where $\hat N$ is a set of Lebesgue measure zero. Since also $\bigl(\partial u(\R^n\setminus X)\cap \partial u(X)$ has measure zero (see \cite[Lemma A.30]{figalliBook}), we conclude that
\begin{equation}\label{MA outside X}
\mathscr{M}_u(\R^n\setminus X)=0.
\end{equation}

Using~\eqref{aeinverse} and~\eqref{subgradimage}, given a Borel set $A\subset X$ and a set $N_A$ of $\mu$-measure zero as in \eqref{aeinverse}, since $\partial u (A) \subset \overline Y$ and $(\nabla u)_\#\mu=\nu$, we can write
\begin{multline*}
|\partial u(A) |  = |\partial u(A) \cap Y| =\int_{\partial u (A) \cap Y} \frac{1}{G(y)} \, d\nu(y) = \int_{(\nabla u)^{-1} (\partial u (A)\cap Y)\cap X} \frac1{G(\nabla u(x))} \, d\mu(x) \\
= \int_{(A\setminus N) \cup N_A} \frac1{G(\nabla u(x))} \, d\mu(x) =  \int_{A} \frac1{G(\nabla u(x))} \, d\mu(x) = \int_{A} \frac{F(x)}{G(\nabla u(x))} \, dx.
\end{multline*}
Since $\mathscr M_u$ is a Borel measure that vanishes outside $X$ (see \eqref{MA outside X})
this proves \eqref{eq:Alex2}.
\\

\noindent
{\it - Step 4: $u$ is strictly convex inside $X$.}
In order to apply the regularity theory for Alexandrov solutions, one needs to prove that $u$ is strictly convex inside $X$.

If $n=2$, it suffices to apply \cite[Theorem 2.19]{figalliBook} to deduce that $u$ is strictly convex inside $X$.
So we consider the case $n \geq 3$ and add the assumption that $F,1/F\in L^\infty(X\cap B_R)$ for any $R>0$.

The strategy of the proof is by now classical, although in the unbounded case the argument becomes much more delicate.

Assume by contradiction that $u$ is not strictly convex. Then there exist $\hat x\in X$ and $p \in \partial u(\hat x)$ such that the (convex set $\Sigma:=\{u=\ell\}$ is not a singleton, where $$\ell(z):=u(\hat x)+p\cdot(z-\hat x).$$
Note that $\Sigma=\{u\le \ell\}$ is closed, since $u$ is lower semicontinuous.

We now consider the following four cases, and we show that none of them can occur (cp. \cite[Proof of Theorem 4.23, Step 4]{figalliBook}, and see \cite[Section A.3.1]{figalliBook} for a definition of exposed points).
\begin{enumerate}
\item[$\bullet$] {\it $\Sigma$ has no exposed points in $\R^n$}. Indeed, otherwise $\Sigma$ would contain an infinite line, 
so \cite[Theorem A.10 and Lemma A.25]{figalliBook} imply that $\partial u(\R^n)$ is contained in
a hyperplane, contradicting \eqref{subgradimage}.

\item[$\bullet$] {\it $\Sigma$ has an exposed point inside $X$}. Since $u$ is finite inside $X$, if $\bar x\in X$ is an exposed point of $\Sigma$ then we can localize the problem near $\bar x$ as in the classical case (see \cite[Theorem 4.10]{figalliBook}). More precisely, since $\bar x\in X\subset {\rm dom}(u)$ and $u$ is locally Lipschitz inside its domain, we can find a neighborhood $\mathcal U_{\bar x}\subset X$ of $\bar x$ where $\nabla u$ is bounded.
Since by assumption $F,1/F\in L^\infty(X\cap B_R)$ and $G, 1/G\in L^\infty(Y\cap B_R)$ for any $R>0,$ it follows that $\frac{F}{G(\nabla u)}$ is bounded away from zero and infinity inside $\mathcal U_{\bar x}$. Hence, this allows us to repeat inside $\mathcal U_{\bar x}$ the very same argument as in the classical case  (see the proof of \cite[Theorem 4.10]{figalliBook})  to obtain a contradiction.

\item[$\bullet$] {\it $\Sigma$ has as exposed points in $\R^n\setminus \overline{X}$}.
Note that the Monge-Amp\`ere measure of $u$ vanishes outside $\overline X$ (see \eqref{MA outside X}). 

Assume that there exists an exposed point $\bar x\in \R^n\setminus \overline{X}$.
If $\bar x$ belongs to the interior of ${\rm dom}(u)$ then, as we did before, we can argue exactly as in the classical case to get the desired contradiction.
So, we can assume that $\bar x\in \Sigma
\cap \partial ({\rm dom}(u))$.
Since $\Sigma$ is convex and does not contain an infinite line, up to replacing $u$ by $u-\ell$ and up to a change of coordinates, we can assume that $\bar x$ coincides the origin $\zero$, $\ell=0$, $\Sigma\subset {\rm dom}(u)\subset \{z_1\leq 0\}$, and $\Sigma\cap \{z_1\geq -\eta\}$ is compact for any $\eta>0$. Also, since $\bar x \in \R^n\setminus \overline{X},$ if $\eta$ is small enough then $\Sigma\cap \{z_1\geq -\eta\}$ is disjoint from $\overline X$.

So, we fix $\eta_0$ small enough such that $\Sigma\cap \{z_1\geq -\eta_0\}\cap \overline X=\emptyset$,
and we
consider the functions
$$
u_\varepsilon(z):=u(z)-\varepsilon(z_1+\eta_0),\qquad S_\varepsilon:=\{u_\varepsilon<0\}.
$$
Note that $S_\varepsilon\to \Sigma\cap \{z_1\geq -\eta_0\}$ as $\varepsilon\to 0$,
therefore the sets $S_\varepsilon$ are bounded and $\overline S_\varepsilon\cap \overline X=\emptyset$ for $\varepsilon\ll 1$.
To get a contradiction, we would like to apply the so-called Alexandrov estimates (see \cite{caf2} or \cite[Theorem 4.23]{figalliBook} for more details). Usually, these estimates are stated for convex functions that are continuous up to the boundary, while in our case the graph  of $u_\varepsilon|_{\partial S_\varepsilon}$ contains some vertical segments in its graph (see Figure 1).
\begin{figure}[h]
\begin{center}
\includegraphics[scale=0.65]{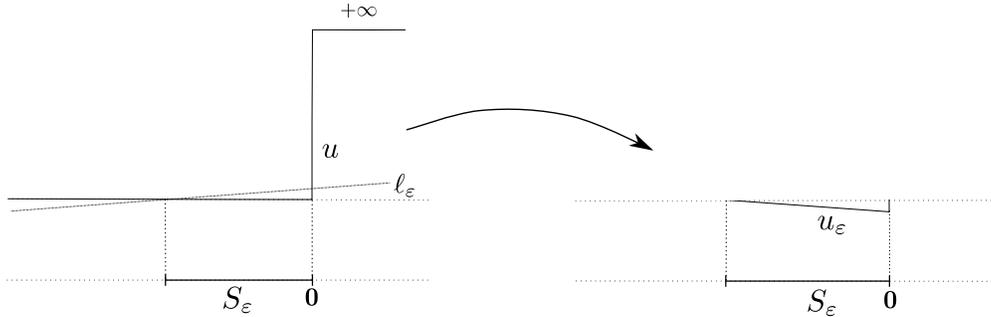}
\caption{We subtract the affine function $\ell_\varepsilon(z):=\varepsilon(z_1+\eta_0)$ from $u$. Because $\zero \in  \partial ({\rm dom}(u))$, $u_\varepsilon|_{\partial S_\varepsilon}$ contains some vertical segments in its graph.}
\end{center}
\label{fig:vert}
\end{figure} 
The key observation is that this does not create any problem, since one can approximate $u_\varepsilon$ with convex functions that are continuous in $\overline S_\varepsilon$, apply the usual Alexandrov estimates to these functions and then take the limit, to deduce that 
$$
0<\varepsilon \eta_0=|u_\varepsilon(\zero)|
\lesssim {\rm dist}(\zero,\partial S_\varepsilon)^{1/n}\bigl(\mathscr{M}_{u_\varepsilon}(\overline S_\varepsilon)\bigr)^{1/n}.
$$
However, since $\mathscr{M}_{u_\varepsilon}=\mathscr{M}_{u}$ (because $u_\varepsilon$ and $u$ differ only by an affine function) and 
$\overline S_\varepsilon\cap \overline X=\emptyset$, it follows by \eqref{MA outside X} that
$$
\mathscr{M}_{u_\varepsilon}(\overline{S_\varepsilon})=0,
$$
a contradiction.

\item[$\bullet$] {\it $\Sigma$ has as exposed points on $\partial X$}. 
As in the previous cases, the new difficulty arises if there exists an exposed point $\bar x\in \Sigma
\cap \partial ({\rm dom}(u))\cap \partial X$.
This is the case when a new argument and the extra assumptions appearing in the statement of the theorem are  needed.

\noindent
{\it - Case (2-a): $X=\R^n$.}
In this case $\partial X=\emptyset$, so there is nothing to prove.

\smallskip

\noindent
{\it - Case (2-b): $Y$ is bounded.}
Let $R>0$ be such that $Y\subset B_R(0)$.
Then, it follows by \eqref{subgradimage} and 
\eqref{extension} 
that $u$ can also be written as
$$
u(z) = \sup_{x\in X,\,  p \in \partial u (x),\,|p|\leq R} \big\{ u(x) + p\cdot (z-x)\big\}.
$$
This implies that $u$ is globally $R$-Lipschitz (being a supremum of $R$-Lipschitz functions), hence ${\rm dom}(u)=\R^n$ and therefore
$ \partial ({\rm dom}(u))=\emptyset$.
\smallskip

\noindent
{\it - Case (2-c): $X$ is locally uniformly convex.} By assumption,
for any $x \in \partial X$ there exists a small ball $B_r(x)$ where $\partial X$ is uniformly convex, namely, there exist $R>0$ and $x_0 \in \R^n$ such that $X\cap B_r(x)\subset B_R(x_0)$ and $x \in \partial B_R(x_0)$.

Since $\bar x\in \Sigma
\cap \partial ({\rm dom}(u))\cap \partial X$ and $X\subset {\rm dom}(u)\subset\{x_1\le 0\}$,
up to replacing $u$ by $u-\ell$ and up to a change of coordinates, we can assume that $\bar x=\zero$, $\ell=0$,
$$
u\geq tx_1\qquad \forall\,t\geq 0
$$
(see the shape of $u$ in Figure 1).
In particular $\partial u(\zero)$ contains the half-line $\R^+e_1$.

Consider the sets
$$
X_\theta:=X\cap \{x=(x_1,x')\in \R\times \R^{n-1}\,:\, -\theta |x'|\leq x_1 \leq 0\},
$$
and
$$
Y_\theta:=Y\cap \{y=(y_1,y')\in \R\times \R^{n-1}\,:\,y_1>0,\,|y'|\leq \theta y_1\}.
$$
We claim that
$$
X_\theta\supset(\nabla u)^{-1}(Y_\theta).
$$
Indeed, since $\R^+ e_1 \in \partial u(\zero)$, if $y=\nabla u(x) \in Y_\theta$ then it follows by monotonicity of $\partial u$ that
$$
(y-te_1)\cdot x =(\nabla u(x)-te_1)\cdot (x-\zero) \geq 0\qquad \forall\,t \geq 0.
$$
Setting $t=0$ this proves that $y\cdot x \geq 0$, while letting $t\to +\infty$ we get $x\cdot e_1\leq 0$.
Combining these two inequalities, one obtains the claim (cp. \cite[Figure 1]{FKM}).

Observe now that,
since $\R^+e_1\subset \partial u(\zero)\subset \partial u(\R^n)\subset \overline Y$, the $n$-dimensional convex set $\overline Y$ contains  both $\zero$ and $e_1$, so
a simple geometric argument shows that $ |Y_\theta\cap B_2| \gtrsim \theta^{n-1}$ for $\theta \ll 1$.\footnote{A possible proof is the following: note that the intersection of $Y_\theta$ with the hyperplane $\{y_1=1\}$ coincides with $Y_\theta^{n-1}:=Y\cap \{y_1=1\} \cap \{|y'|\leq \theta\}$, which has ($n-1$)-dimensional measure of order $\theta^{n-1}$.
Hence, by convexity, $\overline Y_\theta\cap B_2$ contains the cone generated by $\zero$ and $Y_\theta^{n-1}$. Noticing that the latter set has volume of order $\theta^{n-1}$, we conclude that $ |Y_\theta\cap B_2|=|\overline Y_\theta\cap B_2| \gtrsim \theta^{n-1}$.} 
Hence, by the transport condition we deduce that 
\begin{align*}
\int_{X_\theta}F(x)\,dx &\geq \int_{(\nabla u)^{-1}(Y_\theta)}F(x)\,dx=
\int_{Y_\theta}G(y)\,dy \\
&\geq \int_{Y_\theta\cap B_2}G(y)\,dy \gtrsim |Y_\theta\cap B_2| \gtrsim \theta^{n-1},
\end{align*}
where we used that $1/G\in L^\infty(Y\cap B_2).$
On the other hand, since $F\in L^\infty(X\cap B_R)$ and $X$ is uniformly convex near $\zero$, we have
$$
\int_{X_\theta}F(x)\,dx\lesssim|X_\theta|\lesssim  \theta^{n+1}
$$
(see Remark \ref{rmk:bdry} below for more details, and compare also with \cite[Figure 1]{FKM}).
Hence, we obtain a contradiction choosing $\theta$ sufficiently small.\\
\end{enumerate}
This concludes the proof of the strict convexity of $u$ inside $X$.
\\

\noindent
{\it - Step 5: conclusion.}
Let  us consider an arbitrary open ball $B$ with $\overline B\subset X\subset {\rm dom}(u)$. Then $\partial u(\overline B)$ is  a  compact set (see \cite[Lemma A.22]{figalliBook}),
and it follows by the strict convexity of $u$ that $\partial u(\overline B)$ is strictly contained inside $Y$.
Therefore,  the local boundedness assumption on the densities $F$ and $G$
(both from above and below) guarantees that there exist some constants $\lambda_B, \Lambda_B>0$ (depending on $B$) such that
 $$
 \lambda_B \le \det D^2 u \le \Lambda_B
\quad \textrm{ in the Alexandrov sense inside $B$.}
$$
Hence, the local regularity theory for strictly convex Alexandrov solutions applies 
(see \cite{cafC1a, cafC2a,dPF,dPFS,FJM}, and also \cite[Chapter 4.6.1]{figalliBook}),
and we deduce that 
$u \in C^{1,\alpha}_{\rm loc}(B)\cap W^{2,1+\epsilon}_{\rm loc}(B)$.
In particular, since $B$ was arbitrary, $u \in C^1(X)$.

Thanks to the strict convexity and $C^1$ regularity of $u$, we deduce that $\nabla u:X\to Y$ is a homeomorphism onto its image. Also, recalling \eqref{subgradimage}, $\nabla u(X)$ is a open subset of $Y$ of full measure inside $Y$.
Finally, if $X=\R^n$ then $\nabla u(\R^n)$ is a convex set (see \cite{Gr}), therefore $\nabla u(X)=Y$.
This concludes the proof of (i).

To prove (ii) we note that if $k\geq 0$, $\beta \in (0,1)$,  and $F$ and $G$ are of class $C^{k,\beta}_{\rm loc}$ on $X$ and $Y$ respectively, then $u\in C^{k+2,\beta}_{\rm loc}(X)$ (see for instance \cite[Chapter 4.6.1]{figalliBook}).
Hence, since $\det D^2 u>0$ inside $X$, it follows that $T=\nabla u:X\to Y$ is a $C^{k+1,\beta}_{\rm loc}$ diffeomorphism onto its image.
\end{proof}

\begin{rem}
{\em As the reader may have noticed, the first three steps of the proof of Theorem~\ref{maintheo} require weaker assumptions and show the following fact:\\}
Let $X$ and $Y$ be two open sets in $\R^n$,  and assume that $|\partial X|=0$
and that $Y$ is convex. Let $F,G:\R^n\to \R$ be two nonnegative Borel functions with $\int_X F = \int_Y G= 1$ such that $F,1/F\in L^\infty_{\rm loc}(X)$ and $G, 1/G\in L^\infty_{\rm loc}(Y)$. Denote by $T=\nabla u$ the Brenier-McCann map  pushing $1_X(x) F(x)dx$ forward to $1_Y(y)G(y)dy$. Then $u$ is an Alexandrov solution 
(namely, Equation \eqref{eq:Alex} holds).
In particular, if in addition $G, 1/G\in L^\infty(Y\cap B_R)$ for any $R>0$ then (arguing as in the proof proof of Theorem~\ref{maintheo}) one deduces that $\mathscr M_u\in L^\infty_{\rm loc}(X)$, and therefore 
$u \in W^{2,1}_{\rm loc}(X)$ thanks to \cite{mooney}.
\end{rem}

\begin{rem}\label{rmk:bdry}
{\em As already mentioned in Remark \ref{rmk:unif convex}, the uniform convexity of $X$ in Case (2-c) can be weakened.
More precisely, as shown in Step 4 of the proof above,
if we choose a system of coordinates such that $$X\subset \{x=(x_1,x')\in \R\times \R^{n-1}\,:\, x_1 \leq 0\},\qquad \zero \in \partial X,$$
then it suffices to ensure that 
$$
X_\theta:=X\cap \{x=(x_1,x')\in \R\times \R^{n-1}\,:\, -\theta |x'|\leq x_1 \leq 0\},
$$ 
satisfies $|X_\theta|=o(\theta^{n-1})$ as $\theta \to 0$. To ensure this, assume that $\partial X$ can be written locally as the graph of a nonnegative function $f:\R^{n-1}\to \R$ with $-f(x')\leq -c|x'|^\lambda$ for some $c>0$ and $\lambda>0$. Note that, since $|x'|$ is small, with no loss of generality we can assume that $\lambda>1$.
Then
\begin{multline*}
X_\theta = \{x=(x_1,x')\in \R\times \R^{n-1}\,:\, -\theta|x'|\leq x_1\leq -f(x')\}\\
\subset
\{x=(x_1,x')\in \R\times \R^{n-1}\,:\, -\theta|x'|\leq x_1\leq -c|x'|^\lambda\}.
\end{multline*}
Note that the inequality $c|x'|^\lambda \leq \theta |x'|$ implies that $|x'|\lesssim \theta^{1/(\lambda-1)}$, therefore $|x_1|\leq \theta |x'| \lesssim \theta^{\lambda/(\lambda-1)}$.
Thus
$$
|X_\theta| \lesssim \theta^{\frac{\lambda}{\lambda-1}}\theta^{\frac{n-1}{\lambda-1}}=\theta^{\frac{n-1+\lambda}{\lambda-1}}.
$$
In particular, to have $|X_\theta|=o(\theta^{n-1})$ we need to ensure that
$$
\frac{n-1+\lambda}{\lambda-1}>n-1 \qquad \Longleftrightarrow\qquad \lambda<\frac{2(n-1)}{n-2}.
$$
}
\end{rem}

\appendix

\section{A counterexample}
In this appendix  we 
show that, given $Y=\R^n$ with $n\geq 3$, one can find a convex set $X$
and 
two nonnegative Borel functions with $\int_X F = \int_Y G= 1$ such that $F,1/F$ belong to $L^\infty_{\rm loc}(X)$ and $G, 1/G$ belong to $L^\infty_{\rm loc}(Y)$, but for which $u$ is not strictly convex nor of class $C^2$. Our example is inspired by the classical Pogorelov's counterexample, see \cite[Section 3.2]{figalliBook}. 

\smallskip
 
Let $n\geq 3$, and consider the function 
defined on $X:=\R^{n-1}\times (-1,1)\to \R$ as 
$$
u(x',x_n):=|x'|^\gamma h(x_n),\qquad \gamma:=2-\frac2n>1, 
$$
with $h(x_n):=(1-x_n^2)^{-\alpha}$ for some suitable $\alpha>0$ to be chosen.
Since
$$
\det D^2 u=\gamma^{n-1}h^{n-2}[(\gamma-1)h h'' - \gamma (h')^2]
$$
and
$$
h'=\frac{2\alpha x_n}{(1-x_n^2)^{\alpha+1}},\qquad 
h''=\frac{2\alpha +2\alpha x_n^2 + 4 \alpha^2x_n^2}{(1-x_n^2)^{\alpha+2}},
$$
we see that $\det D^2u>0$ provided
$$
(\gamma-1)[2\alpha +2\alpha x_n^2 + 4 \alpha^2x_n^2]>4\gamma \alpha^2 x_n^2, 
$$
or equivalently
$$
(\gamma-1)(1+x_n^2)>2\alpha x_n^2.
$$
Since $2x_n^2<1+x_n^2$ for $|x_n|<1$, a sufficient condition is to impose $\alpha\leq \gamma-1$.
As we shall see later, we actually need to impose $\alpha<\gamma-1$.

Note that, by the particular structure of $D^2u$ (see \cite[Section 3.2]{figalliBook}) the condition $\det D^2u>0$ ensures also that $u$ is convex. To be precise, $u$ is defined on $\dom(u) = X \cup \{(0,1)\}\cup\{(0,-1)\}$ with value $0$ at these two extra points, so that $u$ is  lower semicontinuous on $\R^n$.

We now want to look at the image of $\nabla u$.

Observe that
\begin{equation}
\label{eq:Du}
\nabla u(x',x_n)=\biggl(\frac{\gamma |x'|^{\gamma-2}x'}{(1-x_n^2)^\alpha},\frac{2\alpha |x'|^\gamma x_n}{(1-x_n^2)^{\alpha+1}}\biggr).
\end{equation}
We claim that $\nabla u(X)=\hat Y \cup\{0\}$ where $\hat Y :=(\R^{n-1}\setminus \{0\})\times \R.
$

To show this, consider a point $y\in \hat Y$, and let us look for a point $x$ such that 
$
y=\nabla u(x).
$
First of all, it follows by \eqref{eq:Du} that $y'$ and $x'$ must be parallel in $\R^{n-1}$, while $y_n$ and $x_n$ have the same sign. Also, using \eqref{eq:Du} again, if $y_n\neq 0$ we get
\begin{equation}
\label{eq:y x}
\frac{|y'|^{\frac{\gamma}{\gamma-1}}}{|y_n|}=
\frac{\gamma^{\frac{\gamma}{\gamma-1}}(1-x_n^2)^{\alpha+1}}{2\alpha (1-x_n^2)^{\alpha\frac{\gamma}{\gamma-1}}|x_n|}=\frac{\gamma^{\frac{\gamma}{\gamma-1}}(1-x_n^2)^{\frac{\gamma-\alpha-1}{\gamma-1}}}{2\alpha |x_n|}.
\end{equation}
Note that, because $\alpha<\gamma-1$, the numerator of the expression in the right hand side goes to zero as $|x_n|\to 1^-$ while the denominator goes to infinity as $|x_n|\to 0$.
This proves that the function
$$
x_n\mapsto \frac{\gamma^{\frac{\gamma}{\gamma-1}}(1-x_n^2)^{\frac{\gamma-\alpha-1}{\gamma-1}}}{2\alpha |x_n|}
$$
is surjective from $(0,1)$ onto $(0,+\infty)$.
In other words, given $y=(y',y_n)$,
we can use the relation \eqref{eq:y x}
to find $x_n$ (the sign of $x_n$ is given by the one of $y_n$, while $x_n=0$ if $y_n=0$), and then use
the relation
$$
y'=\frac{\gamma |x'|^{\gamma-2}x'}{(1-x_n^2)^\alpha}
$$
(see \eqref{eq:Du}) to find $x'$.
This proves the claim.

Also, since $\det D^2u>0$ and $u$ is smooth for $|x'|>0$, it is easy to check that $\nabla u$ is a diffeomorphism from $\hat X:=(\R^{n-1}\setminus \{0\})\times (-1,1)$ onto $\hat Y$. 

Now, note that $Y$ differs from $\R^n$ by a set of measure zero.
Hence, we can define $G(y):=c_ne^{-|y|^2/2}$ on $\R^n$,
and set
$$
F(x):=\det D^2 u(x) \,G(\nabla u(x))\qquad \forall \,x\in X.
$$
Since $\nabla u$ maps compact subset of $X$ onto compact subsets of $Y$,
it follows that
 $F,1/F \in L^\infty_{\rm loc}(X)$, but $u$ is not strictly convex and it is not of class $C^2$. 

The interested reader may observe that, in this example,
 $F(x)\to 0$ as $x\to \partial X \setminus\{\pm e_n\}$.
Note that this is essentially needed: thanks to Corollary \ref{maincor}, one cannot construct a similar example with $F,1/F\in L^\infty (X\cap B_R)$ for any $R>0$.

\begin{rem}
{\em Let us summarize, from a convexity viewpoint, the example above. The lower semicontinuous convex function $u$ has domain  $\R^{n-1}\times (-1,1)\cup \{(0,\pm 1)\}\subset \R^n$ and tends to infinity at the boundary except at the two exceptional points $\{(0,\pm 1)\}$,  the function being constant equal to zero on the segment $\{0\}\times[-1,1]$. The function $u$ is $C^1$ on the open set  $\R^{n-1}\times (-1,1)$  and the image of the gradient is $[(\R^{n-1}\setminus \{0\})\times \R] \cup \{0\}$. At the two remaining points $\{(0,1)\}$ and $\{(0,-1)\}$  on the boundary, $u$ has as subgradients the half-lines  $\{0\}\times (0,+\infty)$ and $\{0\}\times (-\infty,0)$, respectively.  We can further point out that the Legendre transform $v=u^\ast$ has domain $\R^n$ and it  is differentiable on $\R^{n}\setminus \{0\}$, with
$$ \partial v(0) = \{0\} \times [-1, 1] \qquad \textrm{and} \qquad   \nabla v (0, \pm s) = \{0\}\times \{\pm 1\}\  \textrm{ for every $s>0$.}
$$ 
Note that some points in the interior of the domain of $v$  are sent to points in the boundary of the domain of $u$, a situation that is extremal with respect to monotonicity of (sub)gradients. }
\end{rem}
\bigskip

{\it Acknowledgments:} A. Figalli  has received funding from the European Research Council under the Grant Agreement No. 721675 ``Regularity and Stability in Partial Differential Equations (RSPDE)''.

\end{document}